\documentclass[arma,numbook,envcountsame,final]{svjour}
\usepackage[T1]{fontenc}
\usepackage{times}
\usepackage{amsmath,amssymb}
\usepackage{microtype}
\usepackage[hidelinks,pdfencoding=auto]{hyperref}

\makeatletter
\renewcommand{\makeheadbox}{}
\makeatother
\journalname{Archive for Rational Mechanics and Analysis}
\smartqed
\numberwithin{equation}{section}
\allowdisplaybreaks[1]

\newcommand{\R}{\mathbb R}
\newcommand{\Ma}{\R^{N\times n}}
\newcommand{\mint}{\mathop{\int\hskip -1,05em -\, \!\!\!}\nolimits}
\newcommand{\dd}{\,\mathrm d}
\newcommand{\loc}{\mathrm{loc}}

\newcommand{\Hosc}{\mathcal H}

\newcommand{\data}{\texttt{data}}
\newcommand{\M}{\mathcal M}
\newcommand{\norm}[1]{\lVert#1\rVert}
\DeclareMathOperator{\dimH}{dim_{\mathcal H}}

\def\F{\mathcal F}

\def\eqn#1$$#2$${\begin{equation}\label#1#2\end{equation}}

\hypersetup{
 pdftitle={The Singular Set of Minima of Quasiconvex Functionals},
 pdfauthor={Cristiana De Filippis, Jan Kristensen, Giuseppe Mingione},
 pdfsubject={Fractional differentiability for quasiconvex integrals with x and u dependence}}

\begin{document}

\title{The Singular Set of Minima\\of Multiple Integrals}
\titlerunning{The Singular Set of Minima}
\author{Cristiana De Filippis, Jan Kristensen\\ Giuseppe Mingione \& Aidan Strong}
\authorrunning{C. De Filippis, J. Kristensen, G. Mingione \& A. Strong}
\maketitle

\begin{abstract}
The Hausdorff dimension of the singular set of local minimizers of quasiconvex integrals is strictly less than the ambient dimension.
\end{abstract}

\section{The result}
In this paper we finally prove that the Hausdorff dimension of the singular set of local minimizers of a quasiconvex integrals  is strictly less than the ambient dimension. The result is obtained without assuming a priori Lipschitz regularity of minima, as done in \cite{km07}. This closes a long story.  Indeed, the problem of obtaining bounds for the Hausdorff dimension
of singular sets of minimizers was
raised several times over the years, see for instance 
\cite[Section~4, p.~1079]{GiaICM}. An interesting point of the proof, whose basic building blocks and approach are still already contained in \cite{km07}, is that at the end the singular set estimate is obtained similarly to the convex case \cite{min03,min03-2,km05,km06}, that is, by proving that the gradient of minima belongs to some fractional Sobolev spaces. This, in fact, implies the desired dimension estimate. In this respect, an interesting parallel with the explicit estimates available in the convex case emerges. For a reasonable survey of results on partial regularity and singular sets estimates we refer to \cite{min08} and related references. 

We consider quasiconvex integrals of the type
\eqn{mainf}
$$
\F[v] := \int_{\Omega} \! F(x,v,Dv) \, dx
$$
and with polynomial $p$-growth, $p\geq 2$, where $\Omega\subset \R^n$ is open, defined on Sobolev maps $v\in W^{1,p}(\Omega;\R^N)$, with $n,N\ge2$. 
We consider standard structural assumption for the integrand $F$ in \eqref{mainf}. 
Here $F:\Omega\times\R^N\times\Ma\to\R$ is a Carathèodory integrand. 
We assume
\begin{equation*}
 \begin{gathered}
 F(x,y,\cdot)\in C^2(\Ma),\\
 (x,y,z)\longmapsto \partial_{zz}F(x,y,z)\quad\hbox{is continuous};
 \end{gathered}
\tag{A1}\label{smooth}
\end{equation*}
\begin{equation*}
 \nu|z|^p-L\le F(x,y,z)\le L(1+|z|^p);
 \tag{A2}\label{coercivity}
\end{equation*}
\begin{equation*}
 |\partial^2_{zz}F(x,y,z)|\le\Lambda(1+|z|^2)^{(p-2)/2};
 \tag{A3}
\end{equation*}
\begin{equation*}
 \begin{split}
 &\int_B\bigl[F(x_0,y_0,z_0+D\psi)-F(x_0,y_0,z_0)\bigr]\dd x\\
 &\quad\ge\lambda\int_B
   (1+|z_0|^2+|D\psi|^2)^{(p-2)/2}|D\psi|^2\dd x;
 \end{split}
 \tag{A4}\label{quasiconvexity}
\end{equation*}
\begin{equation*}
 \begin{split}
 &|F(x,y,z)-F(x_0,y_0,z)|\le L_0\,\omega(|x-x_0|+|y-y_0|)(1+|z|^p),\\
 &\omega(t)=\min\{1,t^\alpha\},\qquad 0<\alpha\le1.
 \end{split}
 \tag{A5}\label{coefficients}
\end{equation*}
Here $0<\nu\le L$, $\lambda,\Lambda>0$ and $L_0\ge0$ are fixed.
The inequalities hold for all the displayed variables. In
\eqref{quasiconvexity}, $x_0\in\Omega$, $y_0\in\R^N$,
$z_0\in\Ma$, $B$ is a ball and $\psi\in C_c^1(B;\R^N)$.
We abbreviate the relevant structural data by
$$
 \data=(n,N,p,\nu,L,\lambda,\Lambda).
$$
\begin{definition}
A map $u\in W^{1,p}_{\loc}(\Omega;\R^N)$ is a local minimizer of the functional $\F$ if $
 \mathcal F(u;B)\le\mathcal F(u+\psi;B)
$
for every ball $B\Subset\Omega$ and every
$\psi\in W^{1,p}_0(B;\R^N)$. 
\end{definition}
We shall widely use the vector field defined by
$$
 V(z)=(1+|z|^2)^{(p-2)/4}z,\qquad z\in\Ma.
$$
The crucial notion here is the one of regular point. 
\begin{definition}[The singular set $\Sigma_u$]
A point is regular if $Du$ has a H\"older continuous representative
in a neighbourhood of that point. The complement of the set of
regular points is denoted by $\Sigma_u$.
\end{definition}
The classical higher integrability of minimizers, based on Gehring's lemma, recalled in
\cite[Theorem~2.1]{km06}, yields that under (the only) assumption (A2) there exists a higher integrability exponent 
\eqn{higherq}
$$
q \equiv q (n,N,p,\nu, L)>p. 
$$
such that $Du\in L^q_{\loc}(\Omega;\Ma)$. Moreover 
\eqn{eq:gehring}
$$
 \mint_{B(x,r)}|Du|^q\dd y
 \le c\left(\mint_{B(x,2r)}(1+|Du|^p)\dd y\right)^{q/p}.
$$
holds for every ball $B(x,2r)\Subset\Omega$. 
As in the convex case \cite{km06,min03}, this exponent heavily intervenes in the singular set estimate. 
\begin{theorem}\label{maint}
Let $u$ be a local minimizer of the functional $\F$ in \eqref{mainf} under
\eqref{smooth}--\eqref{coefficients}. With $\alpha$ as in \textnormal{A5} and  $q$ as in \eqref{higherq}, set 
\eqn{espo}
$$
 \sigma=\min\{\alpha,q-p\},\qquad
 s_* =\min\{s_0,\sigma/2\},
 $$
where $s_0>0$ is a constant depending only on $\data$ and determined during the proof. Then, for every $0<s<s_*$,
\begin{equation}
 V(Du)\in W^{s,2}_{\loc}(\Omega;\Ma),\qquad
 Du\in W^{2s/p,p}_{\loc}(\Omega;\Ma)
 \label{eq:mainfractional}
\end{equation}
with 
\begin{equation}
 \begin{split}
 &[V(Du)]_{W^{s,2}(B(x_0,R))}^2
       +[Du]_{W^{2s/p,p}(B(x_0,R))}^p\\
 &\quad\le \frac{c}{R^{2s}}\left[
       \int_{B(x_0,8R)}(1+|Du|^p)\dd x
       +R^\sigma\int_{B(x_0,8R)}|Du|^q\dd x\right]
 \end{split}
 \label{mainest}
\end{equation}
that holds for every ball $B(x_0,8R)\Subset\Omega$ with $0<R\le1$, 
where $c\equiv c(\textnormal{\data},L_0,\alpha,q,s)$. Moreover,
\eqn{stimaf}
$$
 \dimH\Sigma_u\le n-2s_*.
$$
\end{theorem}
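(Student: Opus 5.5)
The argument follows the Kristensen–Mingione strategy of \cite{km07}, but removes the a priori Lipschitz assumption by a quantitative excess decay that holds on \emph{all} balls, not only on small-excess ones. The key object is the excess
$$
E(x,r)=\mint_{B(x,r)}|V(Du)-(V(Du))_{x,r}|^2\dd y ,
$$
and the aim is a decay estimate with a small power gain, valid at every scale and every point. Partial regularity alone gives decay only where the excess is small and $|(Du)_{x,r}|$ is bounded, which is not uniform.

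\textbf{Step 1: excess decay on all balls.} I will prove a Caccioppoli inequality of the second kind by testing (A4) with cut-off versions of $u-\ell$, for affine $\ell$ with $D\ell=(Du)_{B}$. This uses (A3) and the coefficient continuity (A5), with the $x,y$ oscillation absorbed via \eqref{eq:gehring}. The reverse Hölder inequality then upgrades it, by Gehring again, to higher integrability of $V(Du)-V(D\ell)$. On each ball $B(x_0,r)$ I compare $u$ with the $\mathcal A$-harmonic approximation, linearizing at $z_0=(Du)_{B}$ and using the Legendre–Hadamard ellipticity implied by (A4). The linear comparison function has the decay of $\mathcal A$-harmonic maps, and the error is controlled by $r^{\alpha}$-type coefficient terms, which involve $|Du|^q$ and produce the $\sigma$, plus a superlinear excess term.

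The key point is that smallness is not required if we only want a \emph{weak} gain. I will use a dichotomy: either $E$ is small relative to a threshold depending on $\data$, and then linearization gives $E(x,\tau r)\le c\tau^{2}E(x,r)+c r^{\sigma}\mint(1+|Du|^q)$; or $E$ is large, and then trivially $E(x,\tau r)\le \tau^{-n}E(x,r)$ together with a "lack of room" argument. For this second case I plan the following: test minimality against a competitor that shows the energy cannot concentrate, via a difference-quotient/fractional Caccioppoli argument applied directly.

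\textbf{Step 2: the fractional estimate \eqref{mainest}.} Rather than pure excess decay, the more robust route, as in the convex case \cite{min03,km06}, is a fractional-difference-quotient estimate. For $|h|$ small, I compare $u$ on $B(x_0+h,r)$ with $r\approx|h|^{\theta}$. Combining the Caccioppoli inequality with quasiconvexity (A4) applied to $\tau_h u$ localized, and with the $\mathcal A$-harmonic decay, I aim for
$$
\int_{B_R}|\tau_h V(Du)|^2\dd x\le c|h|^{2s}\Bigl[\int_{B_{8R}}(1+|Du|^p)+R^\sigma\!\int_{B_{8R}}|Du|^q\Bigr].
$$
This is obtained by covering $B_R$ with balls of radius $|h|^{\theta}$. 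On each ball the oscillation of $V(Du)$ at scale $|h|$ is bounded by $(|h|/r)^{2s_0}$ times the mean energy, where the exponent $s_0$ comes from a decay-with-gain valid on every ball, plus $r^{\sigma}$ from the coefficients. Optimizing $\theta$ gives exponent $\min\{s_0,\sigma/2\}$. The standard characterization of $W^{s,2}$ via difference quotients, for $s$ below the exponent obtained, yields the bound on $V(Du)$. The bound on $Du$ then follows from $|V(z_1)-V(z_2)|^2\gtrsim|z_1-z_2|^p$ for $p\ge2$.

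\textbf{Step 3: dimension.} By the $\varepsilon$-regularity (partial regularity) theorem, $\Sigma_u$ is contained in
$$
\Bigl\{x:\liminf_r E(x,r)>0\Bigr\}\cup\Bigl\{x:\limsup_r|(V(Du))_{x,r}|=\infty\Bigr\}.
$$
For $w=V(Du)\in W^{s,2}$, the standard capacity/Hausdorff content lemma for fractional Sobolev functions gives that both sets have $\mathcal H^{n-2s+\varepsilon}$-measure zero. The first set is handled by the fractional Poincaré inequality, the second by Lebesgue points. Letting $s\uparrow s_*$ gives \eqref{stimaf}.

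The main obstacle is Step 1–2's uniform gain $s_0$ at \emph{every} point, without small excess. In \cite{km07} Lipschitz bounds made linearization global. Here I would try first to derive the gain from the Caccioppoli inequality plus Gehring self-improvement applied to $V(Du)-V(z_0)$: higher integrability of the excess yields a fractional Widman hole-filling type decay $E(\tau r)\le\tau^{2s_0}E(r)$ up to coefficient errors. This is plausible since the reverse Hölder exponent depends only on $\data$.
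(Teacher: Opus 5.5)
\textbf{There is a genuine gap: the decay your Steps 1--2 rely on is false.} Your whole plan rests on an excess decay $E(x,\tau r)\le\tau^{2s_0}E(x,r)$, up to coefficient errors, holding at \emph{every} centre and \emph{every} scale without smallness. So neither the dichotomy of Step 1 nor the ball-by-ball bound $(|h|/r)^{2s_0}$ of Step 2 can be established. To see this, take $L_0=0$, so there are no error terms. Iterating such a decay gives $E(x,\rho)\le c(\rho/r)^{2s_0}E(x,r)$ uniformly for $x$ in compact sets. Campanato's criterion then makes $V(Du)$, hence $Du$, H\"older continuous everywhere, i.e.\ $\Sigma_u=\emptyset$. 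This contradicts the known singular minimizers of smooth, uniformly convex, $x$- and $u$-independent integrands with quadratic growth (\v{S}ver\'ak--Yan, Mooney--Savin), which satisfy (A1)--(A5) with $p=2$.

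Your proposed sources of the gain also cannot work:
\begin{itemize}
\item The reverse H\"older inequality for $V(Du)-V(z_0)$ only gives $\mint_{B_\rho}|V(Du)-V(z_0)|^2\dd x\le c(r/\rho)^{n/(1+\varepsilon)}\mint_{B_r}|V(Du)-V(z_0)|^2\dd x$. This is a slightly milder \emph{growth} than the trivial $(r/\rho)^n$, never a decay.
\item Widman's hole filling gives decay of the unnormalized energy with an exponent below $n$, which says nothing about the normalized excess.
\item The difference-quotient route is closed. Estimating $\tau_hV(Du)$ requires testing the Euler--Lagrange system with $\tau_{-h}(\eta^2\tau_hu)$ and using monotonicity of $\partial_zF$, i.e.\ convexity. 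Quasiconvexity (A4) only controls compactly supported perturbations of a \emph{fixed} affine map, so it cannot be ``applied to $\tau_hu$''.
\end{itemize}

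\textbf{The missing idea is to prove decay only for an \emph{average over centres and scales}}, which is compatible with singular points. The paper keeps what you correctly identify: a Caccioppoli inequality of the second kind on all balls, with no smallness. It uses it in weighted form with the $L^2$-best affine approximation $P_{x,2r}u$, and integrates it against $\dd x\,\dd r/r$.
\begin{itemize}
\item The right-hand side becomes the Dorronsoro square function of a localization $v$ of $u-z_0y$, plus a $\beta_p^p$ term and a maximal-function weight when $p>2$. Dorronsoro's theorem bounds this by $\norm{Dv}_{L^2}^2+\norm{Dv}_{L^p}^p$.
\item This yields the Carleson-type bound $\int_{B(x_0,R)}\int_0^RE(x,r)\frac{\dd r}{r}\dd x\le K|B_{4R}|E(x_0,4R)+MR^\sigma\int_{B(x_0,4R)}(1+|Du|^q)\dd x$.
\item Averaging this over centres, with $\mathbb I(t)=\int_0^t\int_{B_{1+r}}E(x,r)\dd x\frac{\dd r}{r}$, gives $\mathbb I(t)\le 4^nK\int_{B_{1+4t}}E(x,4t)\dd x$ plus $c\,t^\sigma$ times the coefficient error.
\item Integrating in $t\in(r,4r)$ against $\dd t/t$ produces hole filling in the scale variable: $\mathbb I(r)\le\tau\,\mathbb I(16r)+c\,r^\sigma(\dots)$, with $\tau=4^nK/(4^nK+\log4)<1$.
\end{itemize}
The gain $s_0$ therefore comes from the logarithmic mass of $\dd r/r$, not from Gehring. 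The resulting decay $\mathbb I(r)\lesssim r^\gamma$ gives \eqref{mainest} through the mean-oscillation characterization of $B^s_{2,2}=W^{s,2}$. Your passage from $V(Du)$ to $Du$ and your Step 3 are fine and match the standard arguments the paper invokes.
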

\begin{remark}[Similarities with the convex case]\label{remarkino}
The crucial information in Theorem~\ref{maint} is the fractional
Sobolev regularity expressed by \eqref{mainest}. Once this is
established, the Hausdorff dimension estimate \eqref{stimaf} follows
by standard arguments \cite{km06,min03,min08}. We shall therefore
confine ourselves to proving \eqref{mainest}.
The exponents in \eqref{espo} reveal a close similarity with the
convex case \cite{km06}, where the estimate is
$\dimH\Sigma_u\le n-\sigma$.
Indeed, our bound reads
\[
 \dimH\Sigma_u\le n-\min\{2s_0,\sigma\},
\]
and thus recovers the convex bound whenever $2s_0\ge\sigma$.
The additional threshold $s_0$ arises from the self-improvement
argument used here to handle quasiconvex integrands.
\end{remark}

\begin{remark}[Origins of the new arguments]\label{remarkino2}
The main new input in our proof stems from an analogy with the
differentiability self-improvement established in \cite{KMS}
for nonlocal equations with measurable kernels.
In the simplest model case, one considers
$u\in W^{\gamma,2}(\R^n)$, with $0<\gamma<1$, satisfying
\[
 \int_{\R^n}\int_{\R^n}
 a(x,y)
 \frac{(u(x)-u(y))(\varphi(x)-\varphi(y))}
      {|x-y|^{n+2\gamma}}\,\dd y\dd x
 =
 \int_\Omega f\varphi\,\dd x
\]
for every $\varphi\in C^\infty_c(\Omega)$, where $a$ is measurable,
$0<\nu\le a(x,y)\le L$, and, for simplicity,
$f\in L^2_{\loc}(\Omega)$.
Such solutions belong to
$W^{\gamma+\delta,2}_{\loc}(\Omega)$ for some
$\delta\in(0,1-\gamma)$.
This gain does not occur in the classical local case when measurable coefficient s linear equations are considered. A key feature of the nonlocal setting is the singular measure
$|x-y|^{-n}\dd y\dd x$, whose mass diverges logarithmically
near the diagonal. For fixed $x$, polar coordinates give
\[
 \frac{\dd y}{|x-y|^n}
 =\frac{\dd r}{r}\,\dd\theta,
 \qquad y=x+r\theta,\quad \theta\in\mathbb S^{n-1}.
\]
In \cite{KMS}, this structure is exploited through dual pairs
involving locally finite perturbations of that measure.
The analogy also extends to the underlying real-variable methods.
The level-set analysis in \cite{KMS} relies on
Calder\'on--Zygmund decompositions and stopping-time arguments
adapted to the diagonal and off-diagonal geometry of $\R^{2n}$.
This analysis across scales is closely
related to the tools underlying Dorronsoro's characterizations
\cite{Dor,DorBesov}. 
In both approaches, the control of oscillations at different
positions and scales leads to norm estimates measuring
differentiability. This connection becomes concrete in the present proof.
Building on the Caccioppoli estimates in \cite{km07}, we combine
their weighted form with Dorronsoro's estimates and integration
over scales to obtain an estimate involving $\dd r/r$.
This estimate leads, as in \cite{KMS}, to a gain
in fractional differentiability.
\end{remark}

\section{Preliminaries and known estimates}
We shall mainly use the notation contained in \cite{km07}. In particular, we use the Euclidean norm on vectors and matrices. A constant $c$
may change from line to line; its relevant dependencies are specified
in the statements. Special occurences will be denoted by $c_1$,
$\tilde{c}$ and so on. The most relevant dependencies will be
indicated. With $x_0 \in \R^n$ and $R>0$, we denote by $B_R
\equiv B(x_0,R) :=\{x \in \R^n \ : \ |x-x_0|< R \}$ the open
(Euclidean) ball with radius $R$ and center $x_0$. Often it is
clear from the context that the balls under consideration all have
the same center and in such cases we merely write $B_R$ etc. We denote
$$
 (g)_{x,r}=\mint_{B(x,r)}g(y)\dd y$$
 the integral average of $g\in L^1(B(x,r);\R^k)$ over the ball $B(x,r)$. For $0<s<1$ and
$1\le q<\infty$, our convention for the fractional seminorm is
$$
 [g]_{W^{s,q}(U)}^q
 =\int_U\int_U\frac{|g(x)-g(y)|^q}{|x-y|^{n+sq}}\dd y\dd x
$$
for $g \colon U\to \R$ being a measurable map. 
Note that we shall use the property
\eqn{minav}
$$
\mint_{B(x,r)}|g-(g)_{x,r}|^p\dd y\leq 2^p\mint_{B(x,r)}|g-g_0|^p\dd y
$$
for every $p\geq 1$ and $g_0\in \R^k$. When $p=2$, it is possible to replace $2^p$ by $1$ in the above inequality. 

\subsection{Useful inequalities}

For $z_0\in\Ma$ and $t\ge0$, let
\[
 \langle z_0\rangle=(1+|z_0|^2)^{1/2},
 \qquad \phi_{z_0}(t)=\langle z_0\rangle^{p-2}t^2+t^p.
\]
The standard inequalities for $V(\cdot)$ yield
\begin{equation}
 c^{-1}\phi_{z_0}(|z-z_0|)
 \le |V(z)-V(z_0)|^2
 \le c\,\phi_{z_0}(|z-z_0|),
 \label{eq:Vequivalence}
\end{equation}
with $c=c(p)$; see~\cite[Section~2]{DLSV}. In particular,
\eqn{eq:Vconsequences}
$$
 \begin{cases}
 |z-z_0|^p\le c|V(z)-V(z_0)|^2\\[4pt]
 |V(z)|^2=(1+|z|^2)^{(p-2)/2}|z|^2 \le c(1+|z|^p).
 \end{cases}
$$
The map $V(\cdot)$ is a locally Lipschitz bijection of $\Ma$ onto itself. We shall use the following excess functional
$$ E(x_0,R):= \mint_{B(x_0,R)} |V(Du)-(V(Du))_{x_0,R}|^{2} \ dx\;. $$
\subsection{Affine approximation}
For $v\in L^2(B(x,\varrho);\R^N)$, let $P_{x,\varrho}v$ denote
its $L^2$-orthogonal projection onto the space of affine maps.
Explicitly, recalling~\cite[Section~3, p.~742]{DM},
\begin{equation}
 \begin{split}
 (P_{x,\varrho}v)(y)
   &=(v)_{x,\varrho}+Q_{x,\varrho}(v)(y-x),\\
 Q_{x,\varrho}(v)
   &=\frac{n+2}{\varrho^2}
     \mint_{B(x,\varrho)}v(z)\otimes(z-x)\dd z.
 \end{split}
 \label{pro}
\end{equation}
Here $(a\otimes b)_{\alpha i}=a_\alpha b_i$ denotes the tensor product.
In particular,
$$
 (P_{x,\varrho}v)(x)=(v)_{x,\varrho}, \qquad D(P_{x,\varrho}v)=Q_{x,\varrho}(v).
$$
This projection is linear, reproduces
affine maps, and is bounded on normalized $L^q(B(x,2r))$ uniformly
in $x,r$, for each $1\le q<\infty$. Define the square function $ S_2$ via
\begin{equation*}
 \begin{split}
 \beta_q(v;x,r)
   &=\left(\mint_{B(x,2r)}
       \left|\frac{v-P_{x,2r}v}{r}\right|^q\dd y\right)^{1/q},\\
 S_2v(x)&=\left(\int_0^\infty
                    \beta_2(v;x,r)^2\frac{\dd r}{r}\right)^{1/2}.
 \end{split}
\end{equation*}
It holds that
\begin{lemma}[Dorronsoro~\cite{Dor,DorBesov}]\label{dorro}
Let $p\ge2$ and let $v\in W^{1,p}(\R^n;\R^N)$ have compact
support. There is $c=c(n,N,p)$ such that
\begin{equation}
 \norm{S_2v}_{L^2}\le c\norm{Dv}_{L^2},
 \qquad
 \norm{S_2v}_{L^p}\le c\norm{Dv}_{L^p}.
 \label{dorronsoro}
\end{equation}
\begin{equation}
 \int_{\R^n}\int_0^\infty
        \beta_p(v;x,r)^p\frac{\dd r}{r}\dd x
 \le c\norm{Dv}_{L^p}^p.
 \label{besov}
\end{equation}
Moreover, if $\M$ is the Hardy--Littlewood maximal operator, then
\begin{equation}
 |D(P_{x,2r}v)|\le c\mint_{B(x,2r)}|Dv|\dd y
                  \le c\M(|Dv|)(x).
 \label{projectiongradient}
\end{equation}
\end{lemma}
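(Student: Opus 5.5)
The plan is to prove the three assertions in increasing order of difficulty: first the pointwise bound \eqref{projectiongradient}, then the square function bounds \eqref{dorronsoro} by a Littlewood--Paley argument, and finally \eqref{besov} as a variant of the same argument that exploits $p\ge2$.

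\emph{Step 1: the projection gradient \eqref{projectiongradient}.} Set $\varrho=2r$. From \eqref{pro},
\[
Q_{x,\varrho}(v)=\frac{n+2}{\varrho^2}\mint_{B(x,\varrho)}v(z)\otimes(z-x)\dd z .
\]
Since $z-x=\tfrac12\nabla_z\bigl(|z-x|^2-\varrho^2\bigr)$ and the function $|z-x|^2-\varrho^2$ vanishes on $\partial B(x,\varrho)$, one integration by parts gives
\[
Q_{x,\varrho}(v)=\frac{n+2}{2\varrho^2}\mint_{B(x,\varrho)}Dv(z)\,\bigl(\varrho^2-|z-x|^2\bigr)\dd z .
\]
Hence $|Q_{x,\varrho}(v)|\le c(n)\mint_{B(x,2r)}|Dv|\dd y$, which is trivially at most $c\,\M(|Dv|)(x)$.

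\emph{Step 2: the square function bounds \eqref{dorronsoro}.} These are Dorronsoro's theorem, and the route would be a Littlewood--Paley decomposition. Because $v$ has compact support, $v\in L^p$, so we may write $v=\sum_{k\in\mathbb Z}v_k$ with $\widehat{v_k}$ supported where $|\xi|\sim2^k$.

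Fix $j$ and $r\sim2^{-j}$. Since $P$ reproduces affine maps and is bounded on normalized $L^q$, we have $v_k-P_{x,2r}v_k=(I-P_{x,2r})(v_k-A)$ for any affine $A$. This yields two estimates for each piece:
\begin{itemize}
\item if $k\le j$, take $A$ to be the first-order Taylor polynomial of $v_k$ at $x$; this gives $\beta_q(v_k;x,r)\lesssim r\sup_{B(x,2r)}|D^2v_k|\lesssim 2^{k-j}\sup_{B(x,2r)}|Dv_k|$ (Bernstein);
\item if $k>j$, take $A=0$; this gives $\beta_q(v_k;x,r)\lesssim r^{-1}\sup_{B(x,2r)}|v_k|\lesssim 2^{j-k}\sup_{B(x,2r)}|Dv_k|$.
\end{itemize}
The suprema are then controlled by the Plancherel--P\'olya--Peetre inequality for band-limited functions. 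For a fixed small $t<\min\{1,2,p\}$,
\[
\sup_{B(x,2^{-j+1})}|Dv_k|\lesssim 2^{(k-j)_+n/t}\,\M_t(Dv_k)(x),\qquad \M_t g:=\M(|g|^t)^{1/t}.
\]
Combining the two cases gives
\[
\beta_q(v;x,r)\lesssim\sum_k 2^{-|j-k|\epsilon}\,\M_t(Dv_k)(x)
\]
for some $\epsilon>0$. For this the decay $2^{j-k}$ must beat $2^{(k-j)n/t}$. That fails as written, so for the high frequencies one should use higher-order vanishing moments: write $v_k=2^{-km}\Delta^{m/2}$-type primitives of $\tilde v_k$ with $m$ large.

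Next, apply discrete Young's inequality in $j$ to pass from the sum over $k$ to $\ell^2$ norms, then the Fefferman--Stein vector-valued maximal inequality for $\M_t$ on $L^p(\ell^2)$ (legitimate since $t<p$ and $t<2$), and finally the Littlewood--Paley square function estimate $\|(\sum_k|Dv_k|^2)^{1/2}\|_{L^p}\lesssim\|Dv\|_{L^p}$. Together these give both inequalities in \eqref{dorronsoro}.

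\emph{Step 3: the Besov-type bound \eqref{besov}.} The same pointwise estimate, now with $q=p$, gives
\[
\int\!\!\int_0^\infty\beta_p^p\frac{\dd r}{r}\dd x\lesssim\sum_j\Bigl\|\sum_k2^{-|j-k|\epsilon}\M_t(Dv_k)\Bigr\|_{L^p}^p\lesssim\Bigl\|\Bigl(\sum_k|\M_t Dv_k|^p\Bigr)^{1/p}\Bigr\|_{L^p}^p .
\]
Because $p\ge2$ we have $\ell^2\subset\ell^p$, so the right-hand side is at most $\|(\sum_k|\M_tDv_k|^2)^{1/2}\|_{L^p}^p\lesssim\|Dv\|_{L^p}^p$, using Fefferman--Stein and Littlewood--Paley as in Step 2. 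This is exactly the embedding $L^p=F^0_{p,2}\subset B^0_{p,p}$ for $p\ge2$, applied to $Dv$.

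The main obstacle is the high-frequency bookkeeping in Step 2. The Peetre maximal loss $2^{(k-j)n/t}$ has to be compensated by enough cancellation, which requires writing $v_k$ through higher-order derivatives of a companion band-limited piece. Everything else consists of standard harmonic-analysis steps.
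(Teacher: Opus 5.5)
Your Step~1 is correct. It is slightly slicker than the paper's argument, which subtracts the mean, uses $|z-x|\le 2r$ and then applies the $L^1$ Poincar\'e inequality. The $\ell^2\hookrightarrow\ell^p$ mechanism in your Step~3 is also exactly what the paper uses to get $\dot W^{1,p}\hookrightarrow\dot B^1_{p,p}$.

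The gap is in Step~2. The paper obtains \eqref{dorronsoro} by quoting \cite[Theorems~2 and~6]{Dor}; you instead try to reprove Dorronsoro's theorem. Your low-frequency bound ($k\le j$) is fine, but the high-frequency regime $k>j$ does not close, and the repair you suggest cannot work. The factor $2^{(k-j)n/t}$ is not an artifact of how $v_k$ is written. The per-piece bound $\beta_q(v_k;x,r)\lesssim 2^{-(k-j)\epsilon}\M_t(Dv_k)(x)$ is false for $k\gg j$ whenever $t\le nq/(n+q)$, in particular for every $t<1$. To see this, let $v_k$ be a wave packet of frequency $2^k$ concentrated in a ball of radius $2^{-k}$ at distance $r$ from $x$. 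Then $P_{x,2r}v_k\approx 0$, the left-hand side is $\approx r^{-1}(2^kr)^{-n/q}$, and the right-hand side is $\approx 2^{-(k-j)\epsilon}2^k(2^kr)^{-n/t}$. Writing $v_k=2^{-km}(-\Delta)^{m/2}\tilde v_k$ gains nothing, because the ratio $|v_k|/|Dv_k|\sim 2^{-k}$ is fixed by the frequency. Moreover, $\beta_q$ involves $|v_k-P_{x,2r}v_k|^q$, so there is no cancellation beyond affine maps for extra vanishing moments to exploit. Step~3 rests on the same pointwise estimate and inherits the gap.

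The missing idea is to avoid the supremum when $k>j$. Boundedness of $P_{x,2r}$ on normalized $L^q$ gives $\beta_q(v_k;x,r)\lesssim 2^{j-k}\bigl(\mint_{B(x,2r)}|w_k|^q\dd y\bigr)^{1/q}$ with $w_k=2^kv_k$, and $\|(\sum_k|w_k|^2)^{1/2}\|_{L^p}\lesssim\|Dv\|_{L^p}$.

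\emph{Case $q=p$.} Apply H\"older in $k$ (the weights $2^{j-k}$, $k>j$, sum to $1$) and integrate in $x$ before summing. Fubini turns each local average into $\|w_k\|_{L^p}^p$, and $\ell^2\hookrightarrow\ell^p$ finishes \eqref{besov}.

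\emph{Case $q=2$.} Fefferman--Stein is unavailable here, since $\M_2$ is unbounded on $L^p(\ell^2)$. Instead, Cauchy--Schwarz in $k$ bounds the high-frequency part of $(S_2v)^2$ by $c\sum_k|w_k|^2*K_k$, where $K_k=\sum_{j<k}2^{j-k}|B_{2^{1-j}}|^{-1}\mathbf{1}_{B_{2^{1-j}}}$ is radially decreasing with $\|K_k\|_{L^1}\le 1$. For $p=2$ you simply integrate. For $p>2$, duality against $g\in L^{(p/2)'}$ together with $K_k*g\le\M g$ gives the bound $\|\sum_k|w_k|^2\|_{L^{p/2}}$; this is where $p\ge 2$ enters.

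Alternatively, a local Nikol'skii inequality with $t\in(nq/(n+q),q)$ in place of your small $t$ makes your pointwise bound true. In that case, however, Step~3 must apply the scalar maximal theorem before $\ell^2\hookrightarrow\ell^p$, because $t<2$ may be impossible. The simplest course is of course to cite Dorronsoro, as the paper does.
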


\begin{proof}
Since $p\ge2$ and $v$ has compact support,
$v\in W^{1,2}(\R^n;\R^N)$ as well.
The uniform boundedness of the affine projections on normalized $L^q$
spaces, together with their reproduction of affine maps, allows us to
control the ball oscillations by the corresponding cube oscillations
at a comparable scale. Thus \eqref{dorronsoro} follows from
\cite[Theorems~2 and~6]{Dor}, applied componentwise; the homogeneous
bounds are obtained by applying the inhomogeneous estimates to
$v(\lambda\,\cdot)$, rescaling and letting $\lambda\to\infty$. Similarly, \cite[Theorems~1 and~2]{DorBesov} and the embedding
$\dot W^{1,p}\hookrightarrow\dot B^1_{p,p}$, valid for $p\ge2$, give
\[
  \int_{\R^n}\int_0^\infty
       \beta_p(v;x,r)^p\,\frac{\dd r}{r}\,\dd x
       \le c\|v\|_{\dot B^1_{p,p}}^p
       \le c\|Dv\|_{L^p}^p,
\]
which proves \eqref{besov}. Here $\|\cdot\|_{\dot B^1_{p,p}}$ denotes
the homogeneous Besov seminorm, and the embedding follows from the
Littlewood--Paley characterization and $\ell^2\hookrightarrow\ell^p$. Finally, \eqref{pro} and the $L^1$ Poincar\'e inequality yield
\[
\begin{aligned}
 |D(P_{x,2r}v)|
 &\le \frac{c}{r|B(x,2r)|}
       \int_{B(x,2r)}|v-(v)_{x,2r}|\,\dd y\\
 &\le \frac{c}{|B(x,2r)|}
       \int_{B(x,2r)}|Dv|\,\dd y
       \le c\M(|Dv|)(x),
\end{aligned}
\]
which is \eqref{projectiongradient}.
\qed
\end{proof}

\subsection{Caccioppoli inequality}
The following Caccioppoli inequality appears in various forms in the literature starting with the foundational work of Evans  \cite{Ev}. Here we report it in the way we need. 
\begin{lemma}[Weighted Caccioppoli inequality]
\label{lem:caccioppoli}
Let $u\in W^{1,p}_{\mathrm{loc}}(\Omega;\mathbb R^N)$ be a local
minimizer of the functional in \eqref{mainf} under assumptions
\textup{(A1)--(A5)}, with $p\ge2$.
Let $q>p$ be a higher integrability exponent for $Du$, and set
$\sigma=\min\{\alpha,q-p\}$.
For every ball $B(x,2r)\Subset\Omega$, with $0<r\le1$, put
$$
 z_0=DP_{x,2r}u,\qquad
 \phi_{z_0}(t)=(1+|z_0|^2)^{(p-2)/2}t^2+t^p.
$$
Then
\begin{equation}
 \begin{split}
 &\mint_{B(x,r)}\phi_{z_0}(|Du-z_0|)\dd y\\
 &\quad\le c\mint_{B(x,2r)}
       \phi_{z_0}\left(\frac{|u-P_{x,2r}u|}{r}\right)\dd y
       +c_0r^\sigma\mint_{B(x,2r)}(1+|Du|^q)\dd y.
 \end{split}
 \label{eq:caccioppoli}
\end{equation}
Here $c=c(\mathrm{data})$ and
$c_0=c_0(\mathrm{data},L_0,\alpha,q)$ are independent of $z_0$
and $(u)_{x,2r}$; moreover, $c_0=0$ if $L_0=0$.
\end{lemma}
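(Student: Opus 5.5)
The plan is to follow the Evans / Acerbi--Fusco scheme in the form of \cite{km07}, with the affine map $a(y)=P_{x,2r}u(y)$ and $z_0=Da$. Note that $a$ is also affine in the frozen sense. Fix radii $r\le t<s\le 2r$ and a cut-off $\eta\in C^1_c(B(x,s))$ with $\eta\equiv1$ on $B(x,t)$ and $|D\eta|\le c/(s-t)$. Set
\[
\varphi=\eta(u-a),\qquad \psi=(1-\eta)(u-a).
\]
The first step is to freeze coefficients at $(x,(u)_{x,2r})$, writing $G(z)=F(x,(u)_{x,2r},z)$. Apply the quasiconvexity (A4) to $G$ with test map $\varphi$ (admissible by density, using (A3) for the $p$-growth). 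This gives
\[
\lambda\int_{B_s}\bigl(\langle z_0\rangle^{p-2}|D\varphi|^2+|D\varphi|^p\bigr)\dd y
\le \int_{B_s}\bigl[G(z_0+D\varphi)-G(z_0)\bigr]\dd y .
\]
The integrand on the right is then split. Use the minimality of $u$ against $u-\varphi$, noting that $z_0+D\varphi=Du-D\psi$ and $Du-D\varphi=z_0+D\psi$.

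The second step is the comparison. Write
\[
G(z_0+D\varphi)-G(z_0)=\bigl[G(Du-D\psi)-G(Du)\bigr]+\bigl[G(Du)-G(z_0+D\psi)\bigr]+\bigl[G(z_0+D\psi)-G(z_0)\bigr].
\]
In the middle bracket, replace $G$ by $F(y,u,\cdot)$ at the cost of the coefficient error (A5). That error is $L_0\,\omega(r+|u-(u)_{x,2r}|)(1+|Du|^p+|z_0|^p)$. Minimality then makes $\int[F(y,u,Du)-F(y,u,Du-\varphi\text{-comp})]\le0$. The remaining pieces are differences $G(w+\xi)-G(w)-DG(w)\xi$, or similar, supported on $B_s\setminus B_t$ where $\eta<1$. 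Since $G(z_0+\cdot)-G(z_0)-DG(z_0)\cdot$ is controlled by (A3) through $\phi_{z_0}$, these are bounded by
\[
c\int_{B_s\setminus B_t}\bigl[\phi_{z_0}(|Du-z_0|)+\phi_{z_0}\bigl(|u-a|/(s-t)\bigr)\bigr]\dd y .
\]
Linear terms cancel because $\int DG(z_0)D\varphi=0$.

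The third step treats the left side. On $B_t$ we have $D\varphi=Du-z_0$. Hence
\[
\int_{B_t}\phi_{z_0}(|Du-z_0|)\le c\int_{B_s\setminus B_t}\phi_{z_0}(|Du-z_0|)+c\int_{B_{2r}}\phi_{z_0}\Bigl(\frac{|u-a|}{s-t}\Bigr)+\text{(error)}.
\]
Widman's hole-filling, adding $c$ times the left side, followed by the standard iteration lemma on $t<s$, removes the first term on the right. This yields the main term of \eqref{eq:caccioppoli}, with $\phi_{z_0}(|u-a|/r)$ thanks to the doubling of $\phi_{z_0}$.

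The main obstacle is the error term, which must be converted into $c_0 r^\sigma\mint(1+|Du|^q)$. I will first use $|z_0|^p\le c\mint_{B_{2r}}|Du|^p$ from \eqref{projectiongradient}, together with Jensen. Next, $\omega(\cdot)\le r^\alpha+|u-(u)_{x,2r}|^\alpha$. The piece $r^\alpha\mint(1+|Du|^p)$ is fine. For the other piece, apply H\"older with exponents $q/p$ and $q/(q-p)$, then the Sobolev--Poincar\'e inequality. Using $\omega\le1$, this gives
\[
\mint \omega(\ldots)^{q/(q-p)}\le c\,r^{\alpha}\Bigl(\mint|Du|\Bigr)^{\alpha}\quad\text{up to interpolation.}
\]
This produces a factor $r^{\alpha(q-p)/q}$ times powers of $\mint|Du|^q$. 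Balancing the exponents by Young's inequality, with $\omega\le1$ absorbing the excess powers, should yield $r^{\min\{\alpha,q-p\}}\mint(1+|Du|^q)$. This is exactly where $\sigma$ arises. Getting the exponent of $r$ precisely equal to $\sigma$, rather than something smaller, is the delicate bookkeeping. If needed, I would exploit $r\le1$ and allow $c_0$ to depend on $q,\alpha$. When $L_0=0$ the error vanishes, so $c_0=0$.
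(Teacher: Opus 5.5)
Your cut-off, freezing at $(x,(u)_{x,2r})$ and hole-filling scheme is the same as the paper's. The gap is in the step that actually produces $\sigma$.

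You bound $\omega^{q/(q-p)}\le\omega\le|u-(u)_{x,2r}|^\alpha$ and then take the $(q-p)/q$-th root. That leaves $r^{\alpha(q-p)/q}$ times a sublinear power of $\mint_{B(x,2r)}(1+|Du|^q)\dd y$, and $\alpha(q-p)/q<\min\{\alpha,q-p\}=\sigma$ strictly. Neither Young's inequality nor $r\le1$ can repair this. For $r\le1$ and $\gamma<\sigma$ one has $r^\gamma\ge r^\sigma$, so using $r\le 1$ goes the wrong way. Moreover, when the averages of $|Du|^q$ stay bounded, your bound is of size $r^{\alpha(q-p)/q}\gg r^\sigma$ as $r\to0$.

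The missing idea is to extract $r^\sigma$ from scaling, not from integrability. Since $\sigma\le\alpha$ and $\omega\le1$, one has $\omega(t)\le t^\sigma$ for every $t\ge0$, so
\[
 \omega\bigl(2r+|u-(u)_{x,2r}|\bigr)(1+|Du|^p)
 \le c\,r^\sigma\Bigl(1+\Bigl|\frac{u-(u)_{x,2r}}{r}\Bigr|^\sigma\Bigr)(1+|Du|^p).
\]
Now apply pointwise Young with exponents $(p+\sigma)/\sigma$ and $(p+\sigma)/p$, then Poincar\'e in $L^{p+\sigma}$, then $p+\sigma\le q$. This gives $c\,r^\sigma\int_{B(x,2r)}(1+|Du|^q)\dd y$, which is what the paper does. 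It is exactly where the two constraints $\sigma\le\alpha$ and $\sigma\le q-p$ enter. Your H\"older route can be rescued in the same spirit, but only if you keep $\omega(t)^{q/(q-p)}\le t^{\sigma q/(q-p)}$ and apply Poincar\'e to that power instead of discarding it.

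A secondary point: the freezing error on the competitor side is not the one you wrote. Minimality compares with $F(y,u-\varphi,Du-D\varphi)$, so that error is
\[
L_0\,\omega\bigl(|y-x|+|u-\varphi-(u)_{x,2r}|\bigr)\bigl(1+|Du-D\varphi|^p\bigr).
\]
Here $|u-\varphi-(u)_{x,2r}|\le 2r|z_0|+|u-a|$ and $|Du-D\varphi|^p\le c\bigl(|Du|^p+|z_0|^p+|u-a|^p/(s-t)^p\bigr)$. The same Young--Poincar\'e argument handles these, together with \eqref{affineLq} for $|z_0|^q$ and $|(u-a)/r|^q$. You must keep the factor $(s-t)^{-p}$ for the iteration lemma. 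Bounding $\omega\le1$ on the term $|u-a|^p/(s-t)^p$ and absorbing it into the main term would make $c$ depend on $L_0$, contrary to the statement.
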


\begin{proof}
We follow the proofs of \cite[Proposition~3.1]{km07} and
\cite[Theorem~11]{DLSV}.
We freeze $F$ at $(x,(u)_{x,2r})$ and retain the weight
$\langle z_0\rangle^{p-2}$ in the cut-off argument of
\cite[Proposition~3.1]{km07}.
Assumptions \textup{(A3)--(A4)} make the constants in this weighted
argument independent of $z_0$ and $(u)_{x,2r}$.
For the frozen problem, the shifted Young functions associated with
\[
 \Phi(t)=\frac{(1+t^2)^{p/2}-1}{p}
\]
satisfy $\Phi_{|z_0|}(t)\simeq\phi_{z_0}(t)$, uniformly in $z_0$,
in the notation of \cite{DLSV}. For the remainder terms generated by the coefficients, we use an estimate already appearing in
the proof of \cite[Lemma~4.4]{km06}.
Since $\omega(t)\le t^\sigma$ and $p+\sigma\le q$, Young's and
Poincar\'e's inequalities give
 \begin{flalign*}
 &\int_{B(x,2r)}\omega(2r+|u-(u)_{x,2r}|)(1+|Du|^p)\dd y\\
 &\quad\le cr^\sigma\int_{B(x,2r)}
       \left(1+\left|\frac{u-(u)_{x,2r}}{r}\right|^{p+\sigma}
                    +|Du|^{p+\sigma}\right)\dd y\\
 &\quad\le cr^\sigma\int_{B(x,2r)} (1+|Du|^q)\dd y.
 \end{flalign*}
Moreover,
\begin{equation}
 |z_0|^q+\mint_{B(x,2r)}
             \left|\frac{u-P_{x,2r}u}{r}\right|^q\dd y
 \le c\mint_{B(x,2r)}|Du|^q\dd y,
 \label{affineLq}
\end{equation}
which follows from \eqref{pro} and Poincar\'e's inequality.
The same argument controls the freezing errors for the cut-off
competitors, with \eqref{affineLq} also controlling the terms
involving $r|z_0|$. The rest of the proof follows via the usual 
hole-filling argument. 
\end{proof}

\section{A Carleson type estimate for the excess}\label{c.s}

This is in the following:
\begin{proposition}\label{prop:relative}
Under the assumptions of Theorem~\ref{maint}, 
there are constants $K=K(\data)\ge1$ and
$M=M(\data,L_0,\alpha,q)\ge0$ such that, for every
$B(x_0,4R)\Subset\Omega$, $0<R\le1$, and $z_0\in\Ma$,
\begin{equation}
 \begin{split}
 &\int_{B(x_0,R)}\int_0^RE(x,r)\frac{\dd r}{r}\dd x\\
 &\quad\le K\int_{B(x_0,4R)}|V(Du)-V(z_0)|^2\dd x
          +MR^\sigma\int_{B(x_0,4R)}(1+|Du|^q)\dd x.
 \end{split}
 \label{eq:relativeZ}
\end{equation}
In particular, 
\begin{equation}
 \begin{split}
 &\int_{B(x_0,R)}\int_0^RE(x,r)\frac{\dd r}{r}\dd x\\
 &\quad\le K|B_{4R}|E(x_0,4R)
          +MR^\sigma\int_{B(x_0,4R)}(1+|Du|^q)\dd x.
 \end{split}
 \label{eq:relative}
\end{equation}
If $L_0=0$, then $M=0$.
\end{proposition}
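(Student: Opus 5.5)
We plan to bound $E(x,r)$ pointwise in $(x,r)$ by Lemma~\ref{lem:caccioppoli}, and then to integrate in $x$ and $\dd r/r$ using Lemma~\ref{dorro}. The key step is to realise the Caccioppoli right-hand side as Dorronsoro-type square functions of a single compactly supported map built from $u$ and the fixed matrix $z_0$.

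\emph{Step 1 (pointwise bound).} Fix $x\in B(x_0,R)$ and $0<r<R$, so that $B(x,2r)\subset B(x_0,3R)$, and put $z_{x,r}:=DP_{x,2r}u$. By \eqref{minav} with exponent $2$ (constant $1$) and by \eqref{eq:Vequivalence}, we get $E(x,r)\le \mint_{B(x,r)}|V(Du)-V(z_{x,r})|^2\dd y\le c\mint_{B(x,r)}\phi_{z_{x,r}}(|Du-z_{x,r}|)\dd y$. Then \eqref{eq:caccioppoli} gives
\[
E(x,r)\le c\,\langle z_{x,r}\rangle^{p-2}\beta_2(u;x,r)^2+c\,\beta_p(u;x,r)^p+c_0r^\sigma\mint_{B(x,2r)}(1+|Du|^q)\dd y .
\]

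\emph{Step 2 (localisation and the $t^p$ and fixed-weight terms).} Let $\ell(y)=z_0y$ and $w=u-\ell-(u-\ell)_{x_0,4R}$. Let $\eta\in C^\infty_c(B(x_0,4R))$ with $\eta\equiv1$ on $B(x_0,3R)$ and $|D\eta|\le c/R$, and set $v=\eta w$, which has compact support. Since $P_{x,2r}$ reproduces affine maps and $v=w$ on $B(x,2r)$, we have $u-P_{x,2r}u=v-P_{x,2r}v$ there, and $z_{x,r}=z_0+DP_{x,2r}v$. Hence $\beta_q(u;x,r)=\beta_q(v;x,r)$, and $\langle z_{x,r}\rangle^{p-2}\le c\langle z_0\rangle^{p-2}+c|DP_{x,2r}v|^{p-2}\le c\langle z_0\rangle^{p-2}+c\M(|Dv|)(x)^{p-2}$ by \eqref{projectiongradient}. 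Integrating over $x\in B(x_0,R)$ and $r\in(0,R)$ against $\dd r/r$:
\begin{itemize}
\item the $\beta_p^p$ term is bounded by $c\|Dv\|_{L^p}^p$ by \eqref{besov};
\item the term $\langle z_0\rangle^{p-2}\beta_2^2$ is bounded by $c\langle z_0\rangle^{p-2}\|S_2v\|_{L^2}^2\le c\langle z_0\rangle^{p-2}\|Dv\|_{L^2}^2$ by \eqref{dorronsoro}.
\end{itemize}

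\emph{Step 3 (the variable weight; this is the main obstacle).} The weight $\langle z_{x,r}\rangle^{p-2}$ depends on the scale, so it does not factor out of the $\dd r/r$ integral. The bound of Step~2 removes the scale dependence by dominating it with $\M(|Dv|)(x)^{p-2}$. Therefore
\[
\int\!\!\int_0^\infty \M(|Dv|)^{p-2}\beta_2(v;x,r)^2\frac{\dd r}{r}\dd x=\int \M(|Dv|)^{p-2}(S_2v)^2\dd x\le \|\M(|Dv|)\|_{L^p}^{p-2}\|S_2v\|_{L^p}^2\le c\|Dv\|_{L^p}^p .
\]
Here we used H\"older's inequality with exponents $p/(p-2)$ and $p/2$, the maximal theorem, and \eqref{dorronsoro}; for $p=2$ this term is simply absent. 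Altogether the double integral of the first two terms is at most $c\int\phi_{z_0}(|Dv|)\dd x$. Now $Dv=\eta(Du-z_0)+w\otimes D\eta$. We apply Poincar\'e's inequality on $B(x_0,4R)$ separately with exponents $2$ and $p$ to the term $|w|/R$, which is legitimate because $\phi_{z_0}$ is a sum of the two powers. This yields $\int\phi_{z_0}(|Dv|)\le c\int_{B(x_0,4R)}\phi_{z_0}(|Du-z_0|)\le K\int_{B(x_0,4R)}|V(Du)-V(z_0)|^2$ by \eqref{eq:Vequivalence}. All constants depend only on $\data$.

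\emph{Step 4 (remainder and conclusion).} By Fubini, $\int_{B(x_0,R)}\int_0^R r^\sigma\mint_{B(x,2r)}g\,\dd y\frac{\dd r}{r}\dd x\le c\int_0^R r^{\sigma-1}\dd r\int_{B(x_0,3R)}g\le cR^\sigma\int_{B(x_0,4R)}g$, applied with $g=1+|Du|^q$. This gives the $M$-term, with $M=0$ when $c_0=0$, i.e.\ when $L_0=0$, and proves \eqref{eq:relativeZ}. For \eqref{eq:relative}, we use that $V$ is a bijection of $\Ma$ and choose $z_0$ with $V(z_0)=(V(Du))_{x_0,4R}$. Then $\int_{B(x_0,4R)}|V(Du)-V(z_0)|^2=|B_{4R}|E(x_0,4R)$.
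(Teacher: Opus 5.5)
Your proposal is correct and follows the paper's proof essentially step by step. It uses the same localized map $v=\eta\,(w-w_{x_0,4R})$ with $w(y)=u(y)-z_0y$, and dominates the scale-dependent weight in the same way, $\langle DP_{x,2r}u\rangle^{p-2}\le c\langle z_0\rangle^{p-2}+c\,\M(|Dv|)(x)^{p-2}$, followed by H\"older's inequality with exponents $p/(p-2)$ and $p/2$, Dorronsoro's bounds and the maximal theorem; the remainder is handled by the same Fubini argument, and \eqref{eq:relative} follows from the same choice $V(z_0)=(V(Du))_{x_0,4R}$.
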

\begin{proof}
Fix $z_0\in\Ma$, write $w(y)=u(y)-z_0y$, and choose
$\eta\in C_c^\infty(B(x_0,4R))$ such that $\eta=1$ on
$B(x_0,3R)$ and $|D\eta|\le c/R$. Finally, set 
$
 v=\eta\bigl(w-w_{x_0,4R}\bigr)
$. Poincar\'e's inequality gives, for
$\ell=2,p$,
\begin{equation}
 \norm{Dv}_{L^\ell(\R^n)}^\ell
 \le c\int_{B(x_0,4R)}|Du-z_0|^\ell\dd y.
 \label{cutoff}
\end{equation}
For $x\in B(x_0,R)$ and $0<r<R$, the ball $B(x,2r)$ lies in
$B(x_0,3R)$. Set
$
 a_{x,r}=P_{x,2r}u$ and $A_{x,r}=Da_{x,r}$.
Linearity of the projection and reproduction of affine maps give
\begin{equation}
 \begin{split}
 u-a_{x,r}&=v-P_{x,2r}v,\\
 A_{x,r}&=z_0+D(P_{x,2r}v).
 \end{split}
 \label{eq:affineresidual}
\end{equation}
Let
\[
 \varepsilon(x,r)=r^\sigma\mint_{B(x,2r)}(1+|Du|^q)\dd y,
 \quad
 \mathcal R=c_0\int_{B(x_0,R)}\int_0^R
              \varepsilon(x,r)\frac{\dd r}{r}\dd x
\]
with $c_0$ being a constant to be chosen sufficiently large. 
Fubini implies
\begin{equation}
 \mathcal R\le\frac{c_0}{\sigma}R^\sigma
                      \int_{B(x_0,3R)}(1+|Du|^q)\dd y.
 \label{error}
\end{equation}
When $p=2$, it is  $V(z)=z$ and $\phi_{z_0}(t)=2t^2$.
Using \eqref{minav}, \eqref{eq:caccioppoli} and
\eqref{eq:affineresidual} give
\[
 E(x,r)\le c\,\beta_2(v;x,r)^2+c_0\varepsilon(x,r).
\]
Integrating and using the $L^2$ estimate in \eqref{dorronsoro}
together with \eqref{cutoff}, we obtain
\begin{flalign*}
 &\int_{B(x_0,R)}\int_0^RE(x,r)\frac{\dd r}{r}\dd x\\
 &\quad\le c\norm{S_2v}_{L^2}^2+\mathcal R
 \le c\int_{B(x_0,4R)}|h-z_0|^2\dd y+\mathcal R.
\end{flalign*}
Together with \eqref{error}, this proves
\eqref{eq:relativeZ} for $p=2$. When $p>2$, by \eqref{projectiongradient},
$$
 \langle A_{x,r}\rangle^{p-2}
 \le c \langle z_0\rangle^{p-2}+c\M(|Dv|)(x)^{p-2} .
$$
Using the minimizing property of the mean,
\eqref{eq:Vequivalence} and \eqref{eq:caccioppoli}, we obtain
 \begin{flalign*}
 E(x,r)
 &\le\mint_{B(x,r)}|V(Du)-V(A_{x,r})|^2\dd y\\
 &\le c\bigl(\langle z_0\rangle^{p-2}+\M(|Dv|)(x)^{p-2}\bigr)
                   \beta_2(v;x,r)^2\\
 &\qquad+c\,\beta_p(v;x,r)^p+c_0\varepsilon(x,r).
 \end{flalign*}
Since $p>2$, we can integrate over $x$ and $r$ and apply
H\"older's inequality with conjugate exponents $p/(p-2)$ and $p/2$.
The estimates \eqref{dorronsoro}--\eqref{besov} and the
boundedness of $\M$ on $L^p$ give
 \begin{flalign*}
 &\int_{B(x_0,R)}\int_0^R
                E(x,r)\frac{\dd r}{r}\dd x\\
 &\quad\le c\langle z_0\rangle^{p-2}\norm{S_2v}_{L^2}^2
       +c\norm{\M(|Dv|)}_{L^p}^{p-2}\norm{S_2v}_{L^p}^2+c\norm{Dv}_{L^p}^p+\mathcal R\\
 &\quad\le c\langle {z_0}\rangle^{p-2}\norm{Dv}_{L^2}^2
                         +c\norm{Dv}_{L^p}^p +\mathcal R\\
 &\quad\le c\int_{B(x_0,4R)}\phi_{z_0}(|Du-z_0|)\dd y+\mathcal R.
 \end{flalign*}
The last line we have used \eqref{cutoff};
\eqref{eq:Vequivalence} and \eqref{error} prove
\eqref{eq:relativeZ}. Recalling that $V(\cdot)$ is bijective, choosing $z_0=V^{-1}((V(Du))_{x_0,4R})$  proves
\eqref{eq:relative}.\qed
\end{proof}
\begin{remark}
The content of Section \ref{c.s} has been Lean-formalized \href{https://github.com/seabiscs/dkms26/blob/main/lean_sec3.zip}{here}.    
\end{remark}
\section{An elementary improvement and the proof of Theorem \ref{maint} }\label{s.4}

We state an elementary inequality for general parameters $\sigma, K, M$. 

\begin{lemma}\label{selfimprovement}
Let $g\in L^2_{\loc}(\Omega;\R^m)$ and
$\mu\in L^1_{\loc}(\Omega)$, $\mu\ge0$. Define
$$
 \Hosc_g(x,r)=\mint_{B(x,r)}|g-(g)_{x,r}|^2\dd y.
$$
Suppose that
\begin{equation}
 \begin{split}
 &\int_{B(x_0,R)}\int_0^R \Hosc_g(x,r)\frac{\dd r}{r}\dd x\\
 &\quad\le K|B_{4R}| \Hosc_g(x_0,4R)
           +MR^\sigma\int_{B(x_0,4R)}\mu\dd x
 \end{split}
 \label{abstractrelative}
\end{equation}
holds whenever $B(x_0,4R)\Subset\Omega$ and $0<R\le1$, with fixed
$K\ge1$, $M\ge0$ and $\sigma>0$. Set
\begin{equation}
 s_0=\frac{1}{2\log16}
       \log\!\left(1+\frac{\log4}{4^nK}\right).
 \label{eq:exponent}
\end{equation}
Then $g\in W^{s,2}_{\loc}(\Omega;\R^m)$ for every
$0<s<\min\{s_0,\sigma/2\}$, and
\begin{equation}
 \begin{split}
 [g]_{W^{s,2}(B(x_0,R))}^2
 &\le \frac{c}{R^{2s}}\left[\int_{B(x_0,8R)}|g|^2\dd x
          +MR^\sigma\int_{B(x_0,8R)}\mu\dd x\right]
 \end{split}
 \label{eq:abstractfractional}
\end{equation}
for $B(x_0,8R)\Subset\Omega$, $0<R\le1$, where
$c=c(n,K,\sigma,s)$. 
\end{lemma}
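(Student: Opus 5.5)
The plan is a hole-filling iteration across scales: the Carleson bound \eqref{abstractrelative} says that the integral of $\Hosc_g$ against $\dd r/r$ over all scales below $R$ is controlled by one top-scale oscillation. I will show that this top-scale term can itself be bounded by the same kind of integral over a band of larger scales. This yields geometric decay of the tail integrals, which converts into the $r^{-2s}$ weight of the Gagliardo seminorm.

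\emph{Step 1: Widman-type hole filling.} For $y\in B(x_0,4R)$ and $r\in[8R,16R]$ we have $B(x_0,4R)\subset B(y,r)$. Minimality of the mean in $L^2$ gives
\[
\Hosc_g(x_0,4R)\le \Bigl(\frac{r}{4R}\Bigr)^n\Hosc_g(y,r)\le 4^n\Hosc_g(y,r).
\]
Averaging over $y\in B(x_0,4R)$ and over $r$ against $\dd r/r$ on $[8R,16R]$, whose total mass is $\log 2$, we get
\[
|B_{4R}|\Hosc_g(x_0,4R)\le \frac{4^n}{\log 2}\int_{B(x_0,4R)}\int_{8R}^{16R}\Hosc_g(y,r)\frac{\dd r}{r}\dd y .
\]
The exact constants may need a different band (for instance $[4R,16R]$, of mass $\log4$) to reproduce \eqref{eq:exponent} precisely.

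\emph{Step 2: localized tails and the iteration.} Set $T(\rho)=\int_{B(x_0,\rho)}\int_0^{\rho}\Hosc_g\frac{\dd r}{r}\dd x$. Then \eqref{abstractrelative}, applied at smaller radii with centers in $B(x_0,R)$, must be summed over a covering. I would cover $B(x_0,\rho)$ by balls $B(z_i,\rho')$ with bounded overlap, apply the hypothesis at each $z_i$, and bound the right-hand sides via Step 1 by an integral of $\Hosc_g$ over an annular band of scales $[\rho',16\rho']$, restricted to a slightly larger spatial ball. Since that band lies above the scales counted in the tail, one gets, after choosing radii consistently at factor $16$,
\[
T_k\le K'\,(T_{k-1}-T_k)+M(16^{-k}R)^\sigma\!\!\int_{B(x_0,8R)}\mu,\qquad K'=\frac{4^nK}{\log4},
\]
where $T_k$ is the tail at scale $16^{-k}R$. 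Hence $T_k\le\theta T_{k-1}+\text{error}$ with $\theta=K'/(1+K'$). Iterating gives $T_k\le c\,\theta^k T_0+c\,M(16^{-k}R)^{\min\{\sigma,\cdot\}}\int\mu$. The relation $\theta=16^{-2s_0}$ is exactly \eqref{eq:exponent}.

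\emph{Step 3: from tails to the seminorm.} For $x,y\in B(x_0,R)$ with $|x-y|\approx r$, averaging over $B(x,2r)$ gives the standard bound
\[
[g]^2_{W^{s,2}(B(x_0,R))}\le c\int_{B(x_0,R)}\int_0^{R} r^{-2s}\,\Hosc_g(x,2r)\frac{\dd r}{r}\dd x+cR^{-2s}\int_{B(x_0,2R)}|g|^2 .
\]
The far-from-diagonal pairs contribute the last term. Splitting the $r$-integral into dyadic bands $(16^{-k-1}R,16^{-k}R]$ bounds it by $\sum_k (16^{-k}R)^{-2s}T_k$. This sum converges when $16^{2s}\theta<1$, that is $s<s_0$, and the error series converges when $2s<\sigma$. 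Finally $T_0\le c\int_{B(x_0,8R)}|g|^2$ trivially, since $\Hosc_g\le\mint|g|^2$ and one more application of \eqref{abstractrelative} handles the $\dd r/r$ singularity near $0$. This yields \eqref{eq:abstractfractional}.

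The main obstacle is Step 2. The hypothesis is anchored at a single center and top scale, while the tail $T_k$ involves all centers, so the covering and the enlargement of spatial balls must be arranged so that the hole-filled quantity genuinely equals the difference $T_{k-1}-T_k$ with bounded overlap. Otherwise the constant $K'$ picks up covering factors that break the exact value of $s_0$. I would first try defining $T_k$ with spatial radius and scale cutoff tied together, for example the integral over $B(x_0,16^{-k}R)$ and $r<16^{-k}R$. I would then check whether the region removed when passing from $k-1$ to $k$ contains the band used in Step 1. If not, I would accept a worse constant in $s_0$.
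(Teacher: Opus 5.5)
Your Steps~1 and~3 are usable ingredients. Step~2, which you correctly identify as the crux, is not carried out, and neither version you sketch proves the lemma as stated.

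\emph{The single-centre version.} Take the tail $T(\rho)=\int_{B(x_0,\rho)}\int_0^\rho\Hosc_g\frac{\dd r}{r}\dd x$. The region removed in $T(16\rho)-T(\rho)$ is not a band of larger scales. It also contains every scale $r<\rho$ over the annulus $B(x_0,16\rho)\setminus B(x_0,\rho)$. Decay of this quantity at one centre does not control $\int_{B(x_0,R)}\int_0^\rho\Hosc_g\frac{\dd r}{r}\dd x$, which is what Step~3 needs. If you sum $T_{z_i}(\rho)\le\theta\,T_{z_i}(16\rho)$ over a covering by balls of radius $\rho$, the right-hand side counts each point about $16^n$ times. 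Since $\theta$ is close to $1$, $16^n\theta>1$ and the contraction is lost.

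\emph{The covering version.} With a fixed spatial domain and a discrete covering, the hole does become a band of scales. But the overlap of the enlarged balls $B(z_i,4\rho')$ now multiplies $K'$. Step~1 has already cost a factor $(r/4R)^n\le4^n$, and it only yields a band of mass $\log2$. The band $[4R,16R]$ you suggest does not fit Step~1: requiring $B(x_0,4R)\subset B(y,r)$ for all $y\in B(x_0,4R)$ forces $r\ge8R$.

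So $K'=4^nK/\log4$, and with it $\theta=16^{-2s_0}$, is read off from \eqref{eq:exponent} rather than derived. Your route gives a strictly smaller exponent than the one in the lemma, which is a weaker statement.

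\emph{What is missing.} You need to aggregate the hypothesis over centres with no covering loss, and to do the hole filling without enlarging balls. The paper works on the normalized ball with
\[
E(r)=\int_{B_{1+r}}\Hosc_g(x,r)\dd x,\qquad \mathbb I(t)=\int_0^tE(r)\frac{\dd r}{r},
\]
so the spatial domain grows with the scale. It integrates \eqref{abstractrelative} at radius $t$ over all centres $x_0\in B_{1+2t}$ and applies Tonelli. The left side is at least $|B_t|\mathbb I(t)$ and the oscillation term is at most $K|B_{4t}|E(4t)$. This gives $\mathbb I(t)\le4^nKE(4t)+4^nMt^\sigma\int_{B_8}\mu\dd x$, with the exact ratio $|B_{4t}|/|B_t|$ and no overlap constant. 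Since $E(4t)$ is already one scale-slice of the global functional, no Widman comparison with larger balls is needed. Averaging over $t\in(r,4r)$ against $\dd t/t$ yields
\[
(\log4)\,\mathbb I(r)\le4^nK\bigl(\mathbb I(16r)-\mathbb I(r)\bigr)+c\,Mr^\sigma\int_{B_8}\mu\dd x .
\]
This is the hole filling with $\tau=4^nK/(4^nK+\log4)$, and it gives exactly \eqref{eq:exponent}. Inserting your Step~1 into this continuous averaging would also close the argument, but with $16^nK/\log2$ in place of $4^nK/\log4$. Your elementary Step~3 is a fine substitute for the Dorronsoro characterization the paper uses at the end.
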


\begin{proof}
{\em Step 1. A fixed ball.}
We first assume $x_0=0$ and $R=1$, and thus work on $B_8=B(0,8)$ and prove the estimate
on $B(0,1)$. For $0<r,t\le1$, define
\begin{equation*}
 E(r)=\int_{B_{1+r}}\Hosc_g(x,r)\dd x,
 \qquad
 \mathbb I(t)=\int_0^t E(r)\frac{\dd r}{r}.
\end{equation*}
Write
$$
 \mathcal B=M\int_{B_8}\mu\dd x,
 \qquad
 \mathcal A=\int_{B_8}|g|^2\dd x+\mathcal B.
$$
Choose points $x_1,\ldots,x_J\in B_2$, with $J\le c(n)$, such that
$$
 B_2\subset\bigcup_{i=1}^J B(x_i,1).
$$
Since $B_{1+r}\subset B_2$ for $0<r\le1$, and
$B(x_i,4)\subset B_6\Subset\Omega$, we may apply
\eqref{abstractrelative} with centre $x_i$ and radius $1$.
The minimizing property of the mean then gives
\begin{equation}
 \begin{split}
 \mathbb I(1)
 &\le \sum_{i=1}^J
       \int_{B(x_i,1)}\int_0^1
       \Hosc_g(x,r)\frac{\dd r}{r}\dd x\\
 &\le \sum_{i=1}^J
       \left[
       K\int_{B(x_i,4)}|g|^2\dd x
       +M\int_{B(x_i,4)}\mu\dd x
       \right] \le c(n,K)\mathcal A. 
 \end{split}
 \label{eq:tailfinite}
\end{equation}
In particular, $\mathbb I$ is finite and nondecreasing on $(0,1]$. For $0<t\le1/4$, integrate \eqref{abstractrelative},
with radius $t$, over centres $x_0\in B_{1+2t}$.
This is an admissible domain of integration because
$
 B(x_0,4t)\subset B_{1+6t}\subset B_8.
$
Tonelli's theorem gives
\[
 \begin{split}
 &\int_{B_{1+2t}}\int_{B(x_0,t)}\int_0^t
       \Hosc_g(x,r)\frac{\dd r}{r}\dd x\dd x_0\\
 &\quad=
   \int_0^t\int_{B_{1+3t}}
       |B(x,t)\cap B_{1+2t}|\,
       \Hosc_g(x,r)\dd x\frac{\dd r}{r}.
 \end{split}
\]
For $0<r\le t$ and $x\in B_{1+r}$, we have
$B(x,t)\subset B_{1+2t}$. Consequently, the last expression
is bounded from below by
$$
 |B_t|\int_0^t\int_{B_{1+r}}
       \Hosc_g(x,r)\dd x\frac{\dd r}{r}
 =|B_t|\mathbb I(t).
$$
On the other hand, the oscillation term on the right-hand side
satisfies
\[
 \begin{split}
 &K|B_{4t}|\int_{B_{1+2t}}
                 \Hosc_g(x_0,4t)\dd x_0\\
 &\qquad\le K|B_{4t}|\int_{B_{1+4t}}
                 \Hosc_g(x_0,4t)\dd x_0
 =K|B_{4t}|E(4t).
 \end{split}
\]
For the error term, Tonelli's theorem and $\mu\ge0$ yield
 \begin{flalign*}
 Mt^\sigma\int_{B_{1+2t}}
                  \int_{B(x_0,4t)}\mu(y)\dd y\dd x_0&=
   Mt^\sigma\int_{B_{1+6t}}
       \mu(y)|B(y,4t)\cap B_{1+2t}|\dd y\\
 &\le
   Mt^\sigma|B_{4t}|\int_{B_8}\mu\dd y
 =|B_{4t}|\mathcal B t^\sigma.
 \end{flalign*}
Combining these estimates and dividing by $|B_t|$, we obtain
\begin{equation}
 \mathbb I(t)\le \texttt{c}_* E(4t)+4^n\mathcal B t^\sigma,
 \qquad
 \texttt{c}_*=4^nK.
 \label{eq:tailinequality}
\end{equation}
Integrating \eqref{eq:tailinequality} over $r<t<4r$ with measure
$\dd t/t$, for $0<r\le1/16$, gives
\begin{equation}
 \begin{split}
 (\log4)\mathbb I(r)
 &\le \texttt{c}_*
       \bigl(\mathbb I(16r)-\mathbb I(4r)\bigr)
       +c(n,\sigma)\mathcal B r^\sigma\\
 &\le \texttt{c}_*
       \bigl(\mathbb I(16r)-\mathbb I(r)\bigr)
       +c(n,\sigma)\mathcal B r^\sigma.
 \end{split}
 \label{eq:tailintegration}
\end{equation}
Here we used the monotonicity of $\mathbb I$ and the identities
\[
 \int_r^{4r}E(4t)\frac{\dd t}{t}
 =\mathbb I(16r)-\mathbb I(4r),
 \qquad
 \int_r^{4r}t^\sigma\frac{\dd t}{t}
 =\frac{4^\sigma-1}{\sigma}r^\sigma.
\]
Thus, using the classical hole-filling argument, we arrive at
\begin{equation}
 \mathbb I(r)\le\tau\mathbb I(16r)+c\mathcal B r^\sigma,
 \qquad
 \tau=\frac{\texttt{c}_*}{\texttt{c}_*+\log4}<1.
 \label{contract}
\end{equation}
Set $\beta=\log(1/\tau)/\log16=2s_0$.
At $r_k=16^{-k}$, iteration of \eqref{contract} gives
\[
 \mathbb I(r_k)\le\tau^k\mathbb I(1)
     +c\mathcal B\sum_{j=1}^k
                    \tau^{k-j}16^{-j\sigma}.
\]
For every $0<\gamma<\min\{\beta,\sigma\}$, the geometric sum
is bounded by $c_\gamma r_k^\gamma$.
Monotonicity of $\mathbb I$ and \eqref{eq:tailfinite} therefore give
\begin{equation*}
 \mathbb I(r)\le c_\gamma\mathcal A r^\gamma
 \quad(0<r\le1),
 \qquad
 0<\gamma<\min\{2s_0,\sigma\}.
\end{equation*}
Fix $s$ such that  $0<s<\min\{s_0,\sigma/2\}$ and choose $\gamma$ such that 
$2s<\gamma<\min\{2s_0,\sigma\}$.
Since $r^{-2s}\mathbb I(r)\to0$ as $r\downarrow0$,
integration by parts gives
$$
 \int_0^1r^{-2s}\dd\mathbb I(r)
 =\mathbb I(1)+2s\int_0^1r^{-2s-1}\mathbb I(r)\dd r \le c(n,K,\sigma,s)\mathcal A.
$$
The standard Littlewood--Paley characterization of
$W^{s,2}=B^s_{2,2}$, in its local mean-oscillation form, yields
\[
 \begin{split}
 [g]_{W^{s,2}(B_1)}^2
 &\le c(n,s)\left(
       \norm{g}_{L^2(B_2)}^2
       +\int_0^1r^{-2s}\dd\mathbb I(r)\right)\\
 &\le c(n,K,\sigma,s)\mathcal A;
 \end{split}
\]
see~\cite[Theorems~1 and~2]{DorBesov}.
When $M=0$, the sharper decay above gives the same conclusion
for every $0<s<s_0$, with constant $c(n,K,s)$.

\medskip
{\em Step 2. Final rescaling.}
Let $B(x_0,8R)\Subset\Omega$, with $0<R\le1$, and define
\[
 \Omega_R=\frac{\Omega-x_0}{R},
 \qquad
 g_R(y)=g(x_0+Ry),
 \qquad
 \mu_R(y)=\mu(x_0+Ry).
\]
Then $B_8\Subset\Omega_R$, and a change of variables gives
$
 \Hosc_{g_R}(z,\rho)
 =\Hosc_g(x_0+Rz,R\rho).
$
For every $B(z_0,4\rho)\Subset\Omega_R$ with $0<\rho\le1$,
apply \eqref{abstractrelative} with centre $x_0+Rz_0$
and radius $R\rho$. This is allowed since $R\rho\le1$.
Changing variables and dividing by $R^n$, we obtain
\[
 \begin{split}
 &\int_{B(z_0,\rho)}\int_0^\rho
       \Hosc_{g_R}(z,t)\frac{\dd t}{t}\dd z\\
 &\quad\le
       K|B_{4\rho}|\Hosc_{g_R}(z_0,4\rho)
       +(MR^\sigma)\rho^\sigma
         \int_{B(z_0,4\rho)}\mu_R\dd z.
 \end{split}
\]
Thus $g_R,\mu_R$ satisfy the same hypothesis, with $K$ and
$\sigma$ unchanged and with
$
 M_R=MR^\sigma
$
in place of $M$.
Applying Step~1 gives
\[
 [g_R]_{W^{s,2}(B_1)}^2
 \le c\left[
       \int_{B_8}|g_R|^2\dd y
       +MR^\sigma\int_{B_8}\mu_R\dd y
       \right],
\]
where $c=c(n,K,\sigma,s)$. Rescaling back this inequality yields \eqref{eq:abstractfractional}. 
\qed
\end{proof}
We are now ready to the proof of Theorem \ref{maint}. As observed in Remark \ref{remarkino} it is sufficient to prove \eqref{mainest}. By \eqref{eq:Vconsequences}, $h=V(Du)\in L^2_{\loc}(\Omega;\Ma)$, and
by \eqref{eq:gehring}, $Du\in L^q_{\loc}(\Omega;\Ma)$. Apply
Lemma~\ref{selfimprovement} to Proposition~\ref{prop:relative}
with $g=h$ and $\mu=1+|Du|^q$. For every $0<s<s_*$,
\begin{equation*}
 \begin{split}
 &[V(Du)]_{W^{s,2}(B(x_0,R))}^2\\
 &\quad\le \frac{c}{R^{2s}}\left[
    \int_{B(x_0,8R)}(1+|Du|^p)\dd x
    +R^\sigma\int_{B(x_0,8R)}|Du|^q\dd x\right],
 \end{split}
 \label{eq:fractionalV}
\end{equation*}
with $s_*$ as in \eqref{espo}, $s_0$ as in
\eqref{eq:exponent}, and $K=K(\data)$. Moreover,
\eqref{eq:Vconsequences}$_1$ implies
$$
 [Du]_{W^{2s/p,p}(B)}^p
 =\int_B\int_B
     \frac{|Du(x)-Du(y)|^p}{|x-y|^{n+2s}}\dd y\dd x \le c[V(Du)]_{W^{s,2}(B)}^2.
$$
This proves \eqref{eq:mainfractional} and \eqref{mainest} and concludes the proof of Theorem \ref{maint}.

    \begin{remark}
The content of Section \ref{s.4} has been Lean-formalized \href{https://github.com/seabiscs/dkms26/blob/main/lean_sec4.zip}{here}.    
\end{remark}

\hspace{1cm}

{\bf Acknowledgments.} This work is supported by the European Research Council, through the ERC StG project NEW, nr.~101220121. 

\hspace{1cm}

{\bf AI disclosure.} In the writing of the paper the authors benefited from the use of GPT 5.6 Astra to review, improve and edit the proofs, and in the Lean formalization.

\address{
Dipartimento SMFI\\
Universit\`a di Parma\\
Parco Area delle Scienze 53/a, I-43124, Parma, Italy \\
\email{cristiana.defilippis@unipr.it}\\
\email{giuseppe.mingione@unipr.it}
\and
Mathematical Institute\\
University of Oxford\\
Radcliffe Observatory Quarter (550)\\
Woodstock Road - Oxford, OX2 6GG \\
\email{kristens@maths.ox.ac.uk}\\
\email{aidan.strong@spc.ox.ac.uk}}


\begin{thebibliography}{99}

\bibitem{Ballquasi}
\textsc{J. M. Ball},
\textit{Convexity conditions and existence theorems in nonlinear elasticity},
Arch. Ration. Mech. Anal. \textbf{63} (1976), 337--403.
\href{https://doi.org/10.1007/BF00279992}
{doi:10.1007/BF00279992}.

\bibitem{DLSV}
\textsc{L. Diening, D. Lengeler, B. Stroffolini \& A. Verde},
\textit{Partial regularity for minimizers of quasi-convex functionals
with general growth},
SIAM J. Math. Anal. \textbf{44} (2012), 3594--3616.
\href{https://doi.org/10.1137/120870554}
{doi:10.1137/120870554}.

\bibitem{Dor}
\textsc{J. R. Dorronsoro},
\textit{A characterization of potential spaces},
Proc. Amer. Math. Soc. \textbf{95} (1985), 21--31.
\href{https://doi.org/10.1090/S0002-9939-1985-0796440-3}
{doi:10.1090/S0002-9939-1985-0796440-3}.

\bibitem{DorBesov}
\textsc{J. R. Dorronsoro},
\textit{Mean oscillation and Besov spaces},
Canad. Math. Bull. \textbf{28} (1985), 474--480.
\href{https://doi.org/10.4153/CMB-1985-058-3}
{doi:10.4153/CMB-1985-058-3}.

\bibitem{DM}
\textsc{F. Duzaar \& G. Mingione},
\textit{Regularity for degenerate elliptic problems via
$p$-harmonic approximation},
Ann. Inst. H. Poincar\'e Anal. Non Lin\'eaire
\textbf{21} (2004), 735--766.
\href{https://doi.org/10.1016/j.anihpc.2003.09.003}
{doi:10.1016/j.anihpc.2003.09.003}.

\bibitem{Ev}
\textsc{L. C. Evans},
\textit{Quasiconvexity and partial regularity in the calculus of variations},
Arch. Ration. Mech. Anal. \textbf{95} (1986), 227--252.

\bibitem{GiaICM}
\textsc{M. Giaquinta},
\textit{The problem of the regularity of minimizers},
in Proceedings of the International Congress of Mathematicians
(Berkeley, California, 1986), Vol.~II,
Amer. Math. Soc., Providence, RI, 1987, 1072--1083.


\bibitem{km05}
\textsc{J. Kristensen \& G. Mingione},
\textit{The singular set of $\omega$-minima},
Arch. Ration. Mech. Anal. \textbf{177} (2005), 93--114.
\href{https://doi.org/10.1007/s00205-005-0361-x}
{doi:10.1007/s00205-005-0361-x}.

\bibitem{km06}
\textsc{J. Kristensen \& G. Mingione},
\textit{The singular set of minima of integral functionals},
Arch. Ration. Mech. Anal. \textbf{180} (2006), 331--398.
\href{https://doi.org/10.1007/s00205-005-0402-5}
{doi:10.1007/s00205-005-0402-5}.

\bibitem{km07}
\textsc{J. Kristensen \& G. Mingione},
\textit{The singular set of Lipschitzian minima of multiple integrals},
Arch. Ration. Mech. Anal. \textbf{184} (2007), 341--369.
\href{https://doi.org/10.1007/s00205-006-0036-2}
{doi:10.1007/s00205-006-0036-2}.

\bibitem{KMS}
\textsc{T. Kuusi, G. Mingione \& Y. Sire},
\textit{Nonlocal self-improving properties},
Anal. PDE \textbf{8} (2015), 57--114.

\bibitem{min03}
\textsc{G. Mingione},
\textit{The singular set of solutions to non-differentiable elliptic systems},
Arch. Ration. Mech. Anal. \textbf{166} (2003), 287--301.
\href{https://doi.org/10.1007/s00205-002-0231-8}
{doi:10.1007/s00205-002-0231-8}.

\bibitem{min03-2}
\textsc{G. Mingione},
\textit{Bounds for the singular set of solutions to non linear elliptic systems},
Calc. Var. Partial Differ. Equ. \textbf{18} (2003), 373--400.
\href{https://doi.org/10.1007/s00526-003-0209-x}
{doi:10.1007/s00526-003-0209-x}.

\bibitem{min08}
\textsc{G. Mingione},
\textit{Singularities of minima: a walk on the wild side of the
Calculus of Variations},
J. Global Optim. \textbf{40} (2008), 209--223.
\href{https://doi.org/10.1007/s10898-007-9226-1}
{doi:10.1007/s10898-007-9226-1}.

\end{thebibliography}
\end{document}